\newcommand{\C}{\mathbb{C}}
\newcommand{\R}{\mathbb{R}}
\newcommand{\J}{\mathcal{J}}
\newcommand{\Ok}{\mathcal{O}}
\newcommand{\ordo}{\mathcal{O}}
\newcommand{\Pk}{{\mathbb{P}}}
\newcommand{\1}{{\bf 1}}
\newcommand{\w}{{\wedge}}
\newcommand{\codim}{{\text{codim}\,}}
\newtheorem{thm}{Theorem}[section]
\newtheorem{lma}[thm]{Lemma}
\newtheorem{prop}[thm]{Proposition}
\theoremstyle{definition}
\theoremstyle{remark}
\newtheorem{preremark}[thm]{Remark}
\newtheorem{preex}[thm]{Example}
\newenvironment{remark}{\begin{preremark}}{\qed\end{preremark}}
\newenvironment{ex}{\begin{preex}}{\qed\end{preex}}
\numberwithin{equation}{section}
\def\ep{\varepsilon}
\def\noqed{\renewcommand{\qedsymbol}{}}
\date{\today}
\begin{document}

\title[On a Monge-Amp\`ere Operator]
{On a Monge-Amp\`ere operator \\ for plurisubharmonic 
functions \\ with analytic singularities}

\author{Mats Andersson, Zbigniew B\l ocki, Elizabeth Wulcan}

\address{Department of Mathematics\\Chalmers University of Technology
and the University of
Gothenburg\\S-412 96 G\"OTEBORG\\SWEDEN}
\email{matsa@chalmers.se,  
wulcan@chalmers.se}
 \address{Institute of Mathematics\\ Jagiellonian University\\
\L ojasiewicza 6\\ 30-348 Krak\' ow \\ Poland}
\email{Zbigniew.Blocki@im.uj.edu.pl}

\begin{abstract}
We study continuity properties of generalized Monge-Amp\`ere operators for
plurisubharmonic functions with analytic singularities. In particular, 
we prove continuity for a natural class of decreasing approximating
sequences. 
We also prove a formula for the total mass of the Monge-Amp\`ere
measure of such a function on a compact K\"ahler mani\-fold. 
\end{abstract}

\thanks {The second author was supported by the Ideas Plus 
grant 0001/ID3/2014/63 of the Polish Ministry of Science and 
Higher Education.
The first and third author were  partially supported by the Swedish
  Research Council.}

\maketitle

\section{Introduction}
We say that a plurisubharmonic (psh) function $u$ on 
a complex manifold $X$ has {\it analytic singularities} 
if locally it can be written in the form 
\begin{equation}\label{as}
  u=c\log|F|+b,
\end{equation}
where $c\geq 0$ is a constant, $F=(f_1,\dots,f_m)$ is a tuple 
of holomorphic functions, and $b$ is bounded.
For instance, if $f_j$ are holomorphic functions and $a_j$ are positive
rational numbers, then
$\log(|f_1|^{a_1}+\cdots +|f_m|^{a_m})$ has analytic singularities.

By the classical Bedford-Taylor theory, \cite{BT1, BT2},  
if $u$ is of the form
\eqref{as}, then in $\{F\neq 0\}$, for any $k$,  
one can define a positive closed current $(dd^c u)^k$ recursively as 
\begin{equation}\label{snabel}
(dd^c u)^k:=dd^c\big (u(dd^c u)^{k-1}\big ).
\end{equation} 
It was shown in \cite{AW} that $(dd^cu)^{k}$ 
has locally 
finite mass near $\{F=0\}$ for any $k$ and that the 
natural extension
$\1_{\{F\neq 0\}}(dd^cu)^{k-1}$ across $\{F=0\}$ is closed, cf.\ 
\cite[Eq.\ (4.8)]{AW}. Moreover, by \cite[Proposition~4.1]{AW},
$u\1_{\{F\neq 0\}}(dd^cu)^{k-1}$ has locally finite mass as well, and therefore one can  define
the Monge-Amp\`ere current
\begin{equation}\label{def}
  (dd^cu)^k:=
dd^c\big(u\1_{\{F\neq 0\}}(dd^cu)^{k-1}\big)
\end{equation}
for any $k$.

Demailly, \cite{D3} extended Bedford-Taylor's definition 
\eqref{snabel}
to the case when the unbounded locus of $u$ is small compared to
$k$ in a certain sense; in particular, if $u$ is as in \eqref{as},
then $(dd^c u)^k$ is well-defined in this way as long as $k\le \codim\{F=0\}=:p$.  
Since, a positive closed current of bidegree $(k,k)$ with support on a
variety of codimension $>k$ vanishes, $\1_{\{F\neq
  0\}}(dd^cu)^k=(dd^cu)^k$ for $k\le p-1$, and it 
follows that \eqref{def} coincides with \eqref{snabel} 
for $k\leq p$.

Recall that the Monge-Amp\`ere operators $(dd^cu)^k$ defined by
Bedford-Taylor-Demailly have the following continuity property: 
if $u_j$ is a decreasing sequence of psh 
functions converging pointwise to $u$, then $(dd^cu_j)^k\to
(dd^cu)^k$ weakly. 
Moreover, a general psh function $u$ is said to be in the domain of 
the Monge-Amp\`ere operator $\mathcal D(X)$ 
if, in all open sets $\mathcal U\subset X$, 
$(dd^cu_j)^n$ converge to the same Radon measure for all decreasing
sequences of smooth psh $u_j$ converging to $u$ in $\mathcal U$. The
domain $\mathcal D(X)$ was characterized in 
\cite{Bma, Bajm}; in case $X$ is a hyperconvex domain in
$\C^n$ $\mathcal D(X)$ coincides with the Cegrell class, \cite{C}.

In this paper we study continuity properties of the Monge-Amp\`ere operators  
$(dd^c u)^k$ defined by \eqref{def}. 
It is not hard to see that  general psh functions with analytic 
singularities do not belong to $\mathcal D(X)$, cf.\ Examples 
~\ref{skola} and ~\ref{dagis}  
below, and therefore we do not have continuity
for all decreasing sequences in general. 
Our main result, however, states  
that continuity does hold for a large class of natural approximating
sequences. It thus provides an alternative definition of $(dd^cu)^k$, and 
at the same time gives further motivation for that 
this  Monge-Amp\`ere operator is indeed natural.

\begin{thm}\label{conv} 
Let $u$ be a negative psh function with analytic singularities on a
manifold of dimension $n$. 
Assume that $\chi_j(t)$ is a sequence of bounded nondecreasing
convex functions defined for $t\in(-\infty,0)$ decreasing
to $t$ as $j\to\infty$. Then for every $k=1,\dots,n$ we have
weak convergence of currents
$$
\big (dd^c(\chi_j\circ u)\big )^k\longrightarrow(dd^cu)^k
$$
as $j\to\infty$.
\end{thm}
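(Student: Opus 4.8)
The plan is to argue by induction on $k$, reducing at each step to the convergence of the potentials $v_j(dd^cv_j)^{k-1}$ and isolating the behaviour near the singular set $E:=\{F=0\}$ as the essential difficulty.

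Write $v_j:=\chi_j\circ u$. Since each $\chi_j$ is bounded, nondecreasing and convex and $u$ is psh, $v_j$ is a bounded psh function; moreover $\chi_j(t)\downarrow t$ gives $v_j\downarrow u$, and since $u\le v_j\le v_1$ with $v_1$ bounded and $u\in L^1_{loc}$, dominated convergence yields $v_j\to u$ in $L^1_{loc}$. For $k=1$ this already gives $dd^cv_j\to dd^cu=(dd^cu)^1$, which is the base case. For the inductive step I use that $v_j$ is bounded, so by Bedford-Taylor $(dd^cv_j)^k=dd^c\big(v_j(dd^cv_j)^{k-1}\big)$, while by definition \eqref{def} $(dd^cu)^k=dd^c\big(u\1_{\{F\neq0\}}(dd^cu)^{k-1}\big)$. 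Since $dd^c$ is continuous for the weak topology, it therefore suffices to prove
$$v_j\,(dd^cv_j)^{k-1}\longrightarrow u\,\1_{\{F\neq0\}}(dd^cu)^{k-1}.$$

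Off the singular set this is standard. On the open set $\{F\neq0\}$ the function $u$ is locally bounded and $v_j\downarrow u$, so the classical Bedford-Taylor continuity theorem for locally bounded psh functions gives $v_j(dd^cv_j)^{k-1}\to u(dd^cu)^{k-1}$ there, which is exactly the restriction of the right-hand side (a trivial extension across $E$). To pass to a global statement I first need uniform-in-$j$ local mass bounds for $(dd^cv_j)^{k-1}$. These I obtain from the Chern-Levine-Nirenberg inequalities, performing the integration by parts over a region whose boundary lies inside $\{F\neq0\}$: there $\sup_j\|v_j\|_{L^\infty}<\infty$ because $u\le v_j\le v_1$ with $v_1$ bounded and $u$ locally bounded, so the resulting bounds are independent of $j$. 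With uniform mass bounds in hand, every subsequence of $v_j(dd^cv_j)^{k-1}$ has a weak limit $S$, and by the previous sentence $S=u\1_{\{F\neq0\}}(dd^cu)^{k-1}$ on $\{F\neq0\}$; in particular $S-u\1_{\{F\neq0\}}(dd^cu)^{k-1}$ is supported in $E$.

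The remaining, and decisive, point is that no mass accumulates on $E$, i.e.\ $\1_E S=0$; this is where the range $k\le n$, allowing $k>\codim E$, makes the statement go beyond Demailly's continuity theorem, and I expect it to be the main obstacle. Since $|v_j|\le c|\log|F||+C$ near $E$, it is enough to show that $\int_{\{|F|<\delta\}}|v_j|\,(dd^cv_j)^{k-1}\wedge\beta^{n-k+1}\to0$ as $\delta\to0$, uniformly in $j$, for a fixed strictly positive smooth $(1,1)$-form $\beta$; that is, a uniform, logarithmically weighted no-concentration estimate for the masses $(dd^cv_j)^{k-1}$ on shrinking tubes around $E$. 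I would establish this by passing to a log-resolution $\pi\colon X'\to X$ for which $\pi^*u=c\log|z^a|+(\text{bounded})$ is divisorial with simple normal crossings, where the mixed Monge-Amp\`ere masses of the bounded approximants $\pi^*v_j$ can be estimated explicitly on tubes around the exceptional divisor and shown to vanish uniformly; together with the finite-mass statement \cite[Proposition~4.1]{AW} for the limiting current (so that its own tube-masses tend to $0$), this forces $\1_E S=0$. Hence $S=u\1_{\{F\neq0\}}(dd^cu)^{k-1}$; as the limit does not depend on the subsequence, the whole sequence converges, and applying the continuous operator $dd^c$ closes the induction.
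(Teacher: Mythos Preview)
Your inductive scheme breaks down at the key step: the intermediate claim
\[
v_j\,(dd^cv_j)^{k-1}\longrightarrow u\,\1_{\{F\neq 0\}}(dd^cu)^{k-1}
\]
is in general \emph{false}, and so is the uniform tube estimate you propose in order to prove it. Take $u=\log|z_1|$ on the unit ball in $\C^2$, $k=2$, and $\chi_j(t)=\max(t,-j)$, so $v_j=\max(\log|z_1|,-j)$. Then $dd^cv_j$ is the pullback to $\C^2$ of the uniform probability measure on the circle $\{|z_1|=e^{-j}\}$, and $v_j\equiv -j$ on its support; hence $v_j\,dd^cv_j=-j\,dd^cv_j$. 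Pairing with any nonnegative test form $\eta(z)\,i\,dz_2\wedge d\bar z_2$ with $\eta(0,\cdot)\not\equiv 0$ gives a quantity $\sim -j$, so $v_j\,dd^cv_j$ does not converge weakly (its total variation is $\sim j$), even though $(dd^cv_j)^2\equiv 0\to 0=(dd^cu)^2$. In particular your estimate $\int_{\{|F|<\delta\}}|v_j|\,(dd^cv_j)^{k-1}\wedge\beta^{n-k+1}\to 0$ uniformly in $j$ fails already here, where no resolution is needed since $u$ is divisorial. The point is that in the product $v_j\,(dd^cv_j)^{k-1}$ both factors move with $j$: the coefficient goes to $-\infty$ on $E$ while the inner current concentrates there, and the resulting $\infty\cdot 0$ is not controlled.

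The paper circumvents this entirely. After resolving so that $\pi^*u=\log|f^0|+v$ with $v$ bounded psh, it exploits an algebraic identity specific to the sequences $\chi_j\circ u$: introducing $\gamma_j$ with $\gamma_j'=(\chi_j')^k$, a direct computation on $\{f^0\neq 0\}$ using that $\log|f^0|$ is pluriharmonic there gives
\[
\big(dd^c(\chi_j\circ\pi^*u)\big)^k=dd^c\big(\gamma_j\circ\pi^*u\,(dd^cv)^{k-1}\big).
\]
The crucial gain is that the inner positive current $(dd^cv)^{k-1}$ is \emph{fixed}, independent of $j$; only the coefficient $\gamma_j\circ\pi^*u$ varies, and it decreases to $\pi^*u$. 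Convergence then follows from the continuity theorem for currents of the form $w\,dd^cw_1\wedge\cdots\wedge dd^cw_{k-1}$ with $w$ psh and the $w_\ell$ bounded psh (Theorem~\ref{appr}), and one pushes forward by $\pi_*$. Your induction tries to reduce to the same convergence with the inner current still depending on $j$, and the example above shows that this cannot work without using the special structure of the $\chi_j$ in a sharper way.
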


For instance, we can take  $\chi_j=\max(t, -j)$ or $\chi_j=(1/2) \log( e^{2t}+1/j)$. 
Applied to $u=\log |F|$ and $\chi_j=(1/2)\log( e^{2t}+1/j)$ Theorem ~\ref{conv}
says that 
\[
\big(dd^c (1/2)\log (|F|^2+1/j)\big )^k \to %\longrightarrow
 \big (dd^c\log |F|\big )^k,\]
 which was 
in fact proved already in \cite[Proposition~4.4]{A}.

By a resolution of singularities the proofs of 
various local properties of Monge-Amp\`ere
currents for psh functions with analytic singularities can be reduced
to the case of psh functions with \emph{divisorial singularities}, i.e., psh
functions that locally are of the form $u=c\log |f|+v$, where 
$c\geq 0$, $f$ is a
holomorphic function and $v$ is bounded. Since $\log|f|$ is
pluriharmonic on $\{f\neq 0\}$, in fact, $v$ is psh. 
In Section ~\ref{sec4} we prove Theorem~\ref{conv} for $u$ of this
form; in this case  
 \begin{equation}\label{skata0}
(dd^cu)^k=dd^c\big(u(dd^cv)^{k-1}\big)
    =dd^cu\wedge(dd^cv)^{k-1}.
\end{equation}
Note that, in light of the Poincar\'e-Lelong formula, 
\begin{equation*}%\label{pl}
(dd^c u)^k =[f=0]\wedge (dd^cv)^{k-1}+(dd^c v)^k, 
\end{equation*}
where $[f=0]$ is the current of integration along $\{f=0\}$ counted
with multiplicities. 

\smallskip

Our definition of  $(dd^c u)^k$ thus relies on the possibility to reduce to the quite special case with
divisorial singularities. It seems like an extension to more general psh $u$ must involve some
further ideas, cf., Section~\ref{pastor}.

We also study psh functions with analytic singularities on 
compact K\"ahler manifolds. Recall that if $(X,\omega)$ 
is such a manifold then a function 
$\varphi\colon X\rightarrow\R\cup\{-\infty\}$ is called {\it
$\omega$-plurisubharmonic ($\omega$-psh)} if locally the function $g+\varphi$
is psh, where $g$ is a local potential for $\omega$, i.e.,  
$\omega=dd^cg$. Equivalently, one can require that 
$\omega+dd^c\varphi\geq 0$. We say that an $\omega$-psh
function $\varphi$ {\it has analytic singularities} if 
the functions $g+\varphi$ have analytic singularities. Note that such a
$\varphi$ is locally bounded outside an analytic variety $Z\subset X$
that we will refer to as the \emph{singular set} of $\varphi$. 
If $\varphi$ is an $\omega$-psh function with analytic singularities,
we can define a global positive current $(\omega + dd^c \varphi)^k$, by
locally defining it as $(dd^c (g+\varphi))^k$, see Lemma ~\ref{niko}.   
We will prove the following formula for the total 
Monge-Amp\`ere mass:

\begin{thm} \label{tmass}
Let $\varphi$ be an $\omega$-psh function with analytic
singularities on a compact K\"ahler manifold $(X,\omega)$ of dimension
$n$.  Let
$Z$ be the singular set of $\varphi$. Then
 \begin{equation}\label{north}
\int_X(\omega+dd^c\varphi)^n
      =\int_X\omega^n
      -\sum_{k=1}^{n-1}\int_X\1_Z(\omega+dd^c\varphi)^k
         \wedge\omega^{n-k}.
\end{equation}
\end{thm}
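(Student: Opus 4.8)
The plan is to reduce \eqref{north} to a telescoping sum built from a global recursion for the currents $T_k:=(\omega+dd^c\varphi)^k$. Write $Z$ for the singular set and set $S_k:=\1_{X\setminus Z}T_k$; recall from \cite[Eq.\ (4.8)]{AW} that locally $\1_{\{F\neq 0\}}(dd^cu)^k$ is closed, and since closedness is a local property, each $S_k$ is a globally well-defined closed positive current. The point of working with $S_k$ rather than $T_k$ is that the defining formula \eqref{def}, read locally with $u=g+\varphi$ where $\omega=dd^cg$, becomes $T_k=dd^c\big((g+\varphi)S_{k-1}\big)$, which is exactly the shape one wants to integrate by parts.

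First I would establish the global identity
\begin{equation*}
T_k=\omega\wedge S_{k-1}+dd^c(\varphi\,S_{k-1}),\qquad k=1,\dots,n.
\end{equation*}
Locally this follows by splitting $dd^c\big((g+\varphi)S_{k-1}\big)=dd^c(g\,S_{k-1})+dd^c(\varphi\,S_{k-1})$ and using that, for a smooth function $g$ and a closed current $S_{k-1}$, the Leibniz rule collapses to $dd^c(g\,S_{k-1})=dd^cg\wedge S_{k-1}=\omega\wedge S_{k-1}$, since all intermediate terms involve $dS_{k-1}$ or $d^cS_{k-1}$, which vanish for a closed current of pure bidegree. The local potentials $g$ cancel across overlaps because $\varphi$ is globally defined, so $dd^c(\varphi\,S_{k-1})$ is a well-defined global current; here I would invoke \cite[Proposition~4.1]{AW} together with the boundedness of $g$ to guarantee that $\varphi\,S_{k-1}$ has locally finite mass (locally $\varphi\,S_{k-1}=(g+\varphi)S_{k-1}-g\,S_{k-1}$, the first term having finite mass by that proposition and the second because $g$ is bounded and $S_{k-1}$ has finite mass).

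Next I would pair both sides with the smooth closed form $\omega^{n-k}$ and integrate over $X$. Since $\omega$ is closed and smooth, $dd^c(\varphi\,S_{k-1})\wedge\omega^{n-k}=dd^c\big(\varphi\,S_{k-1}\wedge\omega^{n-k}\big)$, and integrating an exact current of top degree over the compact manifold $X$ gives zero (equivalently, pairing $dd^c$ of a finite-mass current with the constant $1$ yields $0$). Writing $S_{k-1}=T_{k-1}-\1_ZT_{k-1}$ in the remaining term gives the recursion
\begin{equation*}
\int_XT_k\wedge\omega^{n-k}=\int_XT_{k-1}\wedge\omega^{\,n-k+1}-\int_X\1_ZT_{k-1}\wedge\omega^{\,n-k+1}.
\end{equation*}
Summing over $k=1,\dots,n$ telescopes the first term, leaving $\int_XT_n$ on the left and $\int_X\omega^n$ from $T_0=1$ on the right, minus $\sum_{j=0}^{n-1}\int_X\1_ZT_j\wedge\omega^{n-j}$. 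The $j=0$ summand is $\int_X\1_Z\,\omega^n$, which vanishes because $Z$ is an analytic variety and hence Lebesgue-null while $\omega^n$ is a smooth volume form; what remains is precisely \eqref{north}.

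I expect the only genuine obstacle to be the justification of the integration-by-parts step, that is, that $\varphi\,S_{k-1}$ is a legitimate current of locally finite mass so that $\int_X dd^c\big(\varphi\,S_{k-1}\wedge\omega^{n-k}\big)=0$ remains valid across the singular set $Z$; the rest is bookkeeping. This finite-mass input is exactly what \cite[Proposition~4.1]{AW} supplies locally, and the global version follows by a partition of unity, since $\varphi$ and $Z$ are global and the only ambiguity in the local recursion is through the smooth, bounded potentials $g$.
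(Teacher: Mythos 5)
Your proposal is correct and follows essentially the same route as the paper: you define the same global current $\varphi\,\1_{X\setminus Z}(\omega+dd^c\varphi)^{k-1}$ via the local potentials $g$, derive the same identity $(\omega+dd^c\varphi)^{k}=\omega\wedge S_{k-1}+dd^c(\varphi S_{k-1})$ (the paper's \eqref{west}), kill the exact term by Stokes, and telescope (the paper phrases the last step as applying \eqref{south} inductively). The only cosmetic difference is notation: the paper's $T_k$ is your $S_k$.
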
 
In particular,
\begin{equation}\label{tvol}
   \int_X(\omega+dd^c\varphi)^n\leq\int_X\omega^n.
\end{equation}   

\begin{remark}
Let $\varphi$ be a general $\omega$-psh function such that the
Bedford-Taylor-Demailly Monge-Amp\`ere operator $(\omega +dd^c
\varphi)^n$ is well-defined; if $\varphi$ has analytic
singularities, this means that the singular set has dimension 0. 
Then it follows from Stokes' theorem that
equality holds in \eqref{tvol}. 
\end{remark}

To see that in general there is not equality in \eqref{tvol}
consider the following simple example:

\begin{ex}\label{ormar}
Let $X$ be the projective
space $\mathbb P^n$ with the Fubini-Study metric $\omega$ and let $n\ge 2$.
Define
  $$\varphi\big ([z_0:z_1:\ldots:z_n]\big ):=\log 
\Big (\frac{|z_1|}{|z|}\Big),\ \ \ 
     z\in\mathbb C^{n+1}\setminus\{0\}.$$ 
Since $(dd^c\log|z_1|)^n=0$ in $\mathbb C^{n+1}$, cf.\ \eqref{def}, it follows that
$(\omega+dd^c\varphi)^n=0$ 
on ~$\mathbb P^n$.
\end{ex}

In Section~\ref{kahl} we provide a geometric interpretation of Theorem~\ref{tmass}
which in particular
shows that inequality in \eqref{tvol} is not an  "exceptional case".

The paper is organized as follows. In Section~\ref{sec3} 
we prove a continuity result for currents of the form
  $$u\,dd^cv_1\wedge\dots\wedge dd^cv_k,$$
where $u$ is psh and $v_1,\dots,v_k$ are locally bounded psh, defined
by Demailly \cite{D}, cf.\ \eqref{skata0}.  %
In Section ~\ref{sec4} we prove Theorem~\ref{conv} for functions with 
divisorial singularities and we also characterize when such functions are maximal.
The general case of Theorem ~\ref{conv} is proved in
Section ~\ref{sec5}. In Section ~\ref{kahl} we prove 
Theorem ~\ref{tmass}. Finally in Section~\ref{pastor} we make some
further remarks.

Most of this work was carried out during the authors' visit at
the Centre for Advanced Study in Oslo and during the second 
named author's visit  in G\"oteborg.
We would like to thank Tristan Collins,
Eleonora Di Nezza,  S\l awomir Ko\l odziej, 
Duong Phong,  Alexander Rashkovskii, Valentino Tosatti, David Witt Nystr\"om, and Ahmed
Zeriahi 
for various discussions related to the subject of this paper.

We would also like to thank the referee for careful reading and important
comments on the first version.

\section{Continuity of certain Monge-Amp\`ere currents}\label{sec3} 

In the seminal paper \cite{BT2} Bedford and Taylor, see \cite[Theorem~2.1]{BT2},
showed that, for $k=1,\dots,n$ and locally bounded psh functions
$u,v_1,\dots,v_k$ on a manifold $X$ of dimension $n$, the current 
\begin{equation}\label{1}
u\,dd^cv_1\wedge\dots\wedge dd^cv_k
\end{equation}
is well-defined and continuous for decreasing sequences. 
Demailly generalized their definition to the case when $u$ is 
merely psh; he proved that the current 
\eqref{1} has locally finite mass, see  \cite[Theorem~1.8]{D}. 
Here we prove the corresponding continuity result. 

\begin{thm}\label{appr}
Assume that $u^j$ is a sequence of psh functions decreasing
to a psh function $u$ and that for $\ell=1,\dots,k$ the 
sequence $v^j_\ell$ of psh functions decreases to a locally bounded
psh $v_\ell$ as $j\to\infty$. Then 
  $$u^j\,dd^cv^j_1\wedge\dots\wedge dd^cv^j_k
    \longrightarrow u\,dd^cv_1\wedge\dots\wedge dd^cv_k$$
weakly as $j\to\infty$.
\end{thm}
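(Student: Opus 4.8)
Since the statement is local and the currents in question are of order zero, it suffices to prove, on a fixed relatively compact neighborhood on which I may assume $u^j,u,v^j_\ell,v_\ell\le 0$, that $\langle u^jT^j,\phi\rangle\to\langle uT,\phi\rangle$ for every strongly positive test form $\phi\ge 0$ of bidegree $(n-k,n-k)$; here $T^j:=dd^cv^j_1\wedge\cdots\wedge dd^cv^j_k$ and $T:=dd^cv_1\wedge\cdots\wedge dd^cv_k$. Each $v^j_\ell$ is locally bounded (psh, hence bounded above, and bounded below by the bounded limit $v_\ell$), so by the Bedford--Taylor theorem all the relevant mixed products of bounded potentials are continuous under decreasing limits; in particular $T^j\to T$ weakly and, for each fixed $N$, $u^j_NT^j\to u_NT$, where $u^j_N:=\max(u^j,-N)$ and $u_N:=\max(u,-N)$ are bounded psh with $u^j_N\downarrow u_N$.

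The first thing I would establish is equiboundedness of masses. Since $-u^j\uparrow -u$ we have $\|u^j\|_{L^1(L)}\le\|u\|_{L^1(L)}<\infty$, while the $\|v^j_\ell\|_{L^\infty(L)}$ are uniformly bounded; Demailly's Chern--Levine--Nirenberg inequality (the estimate underlying \cite[Theorem~1.8]{D}) then gives, for $K\subset\subset L$,
\[
\int_K(-u^j)\,T^j\wedge\beta^{n-k}\le C_{K,L}\,\|u\|_{L^1(L)}\prod_\ell\max\!\big(\|v^1_\ell\|_{L^\infty(L)},\|v_\ell\|_{L^\infty(L)}\big),
\]
uniformly in $j$. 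Hence the masses are uniformly bounded and it is enough to control the scalar quantities $\langle u^jT^j,\phi\rangle$.

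From $u^j\le u^j_N$ I get $\langle u^jT^j,\phi\rangle\le\langle u^j_NT^j,\phi\rangle$, whence $\limsup_j\langle u^jT^j,\phi\rangle\le\langle u_NT,\phi\rangle$ by the Bedford--Taylor convergence; letting $N\to\infty$ and using dominated convergence (with $|u_N|\le|u|\in L^1(T\wedge\phi)$) yields $\limsup_j\langle u^jT^j,\phi\rangle\le\langle uT,\phi\rangle$. It remains to prove the matching lower bound $\liminf_j\langle u^jT^j,\phi\rangle\ge\langle uT,\phi\rangle$. Decomposing $\langle(-u^j)T^j,\phi\rangle=\langle(-u^j_N)T^j,\phi\rangle+\langle(u^j_N-u^j)T^j,\phi\rangle$ and again invoking Bedford--Taylor for the first term (together with $\langle(-u_N)T,\phi\rangle\to\langle(-u)T,\phi\rangle$), the whole theorem comes down to the uniform tail estimate
\[
\lim_{N\to\infty}\ \limsup_{j\to\infty}\ \int_K(u^j_N-u^j)\,T^j\wedge\beta^{n-k}=0,
\]
which dominates $\langle(u^j_N-u^j)T^j,\phi\rangle$ since $\phi\le C\beta^{n-k}$ on its support.

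This tail estimate is the crux, and the step I expect to be hardest: it asserts that as $j\to\infty$ no mass of $(-u^j)T^j$ is absorbed into the polar locus where $u^j\to-\infty$. The naive reduction $0\le u^j_N-u^j\le u_N-u$ turns it into a statement about the fixed function $u_N-u=(-N-u)^+$ paired with the varying currents $T^j$; but $-u$ is merely lower semicontinuous, so weak convergence of $T^j$ gives precisely the wrong inequality, and a purely soft argument (interchanging the limits in $N$ and $j$) fails. The plan is therefore to prove a quantitative bound, uniform in $j$,
\[
\int_K(u^j_N-u^j)\,T^j\wedge\beta^{n-k}\le C_{K,L}\,\|u^j_N-u^j\|_{L^1(L)}\le C_{K,L}\,\|u_N-u\|_{L^1(L)},
\]
the first inequality by the integration-by-parts (symmetrization) argument behind the Chern--Levine--Nirenberg estimate, with constants depending only on $K,L$ and $\prod_\ell\|v^j_\ell\|_{L^\infty(L)}$ and hence uniform in $j$, and the second from $u^j\ge u$. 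As $N\to\infty$ the right-hand side tends to $0$ because $u_N-u\downarrow 0$ with $u\in L^1_{loc}$, closing the argument. Securing this inequality for the \emph{difference} $u^j_N-u^j$ of plurisubharmonic functions (rather than a single psh potential), with the constant genuinely independent of $j$, is the delicate point I would concentrate on.
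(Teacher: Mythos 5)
Your upper bound $\limsup_j\langle u^jT^j,\phi\rangle\le\langle uT,\phi\rangle$ is fine (the paper obtains the same half by convolving $u^{j_0}$ rather than truncating at $-N$, but the mechanism --- freeze the unbounded factor by monotonicity, then apply Bedford--Taylor to the remaining bounded data --- is the same). The gap is in the lower bound. The inequality you plan to prove,
\[
\int_K(u^j_N-u^j)\,T^j\wedge\beta^{n-k}\le C_{K,L}\,\|u^j_N-u^j\|_{L^1(L)},
\]
is false. The Chern--Levine--Nirenberg argument requires the unbounded factor to be plurisubharmonic: the integration by parts produces a term involving $dd^c$ of that factor, whose sign can only be discarded when it is positive, and for the difference $u^j_N-u^j$ that sign is lost. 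Concretely, take $n=k=1$, $v^j=v=\max(\log|z|,-1)$, so that $T^j=dd^cv$ is (a multiple of) arc-length measure on $\{|z|=e^{-1}\}$, and $u^j=u=M\log|z-z_0|$ with $|z_0|=e^{-1}$. Setting $r=e^{-N/M}$, the function $u_N-u=M\log\bigl(r/|z-z_0|\bigr)$ on $\{|z-z_0|<r\}$ has $\|u_N-u\|_{L^1}\approx Mr^2$, while $\int(u_N-u)\,dd^cv\approx Mr$; the ratio $e^{N/M}$ blows up as $N\to\infty$, so no constant independent of $N$ (let alone of $j$) exists. Since your tail estimate is essentially equivalent to the theorem itself (given Bedford--Taylor convergence of the truncations $u^j_NT^j$), the reduction does not constitute progress unless a genuinely different mechanism for it is supplied.

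The paper's proof avoids truncation altogether. After arranging, as in Bedford--Taylor, that $v_\ell^j=v_\ell=A(|z-z_0|^2-r^2)$ near $\partial B$, it proves an integration-by-parts monotonicity (Lemma \ref{int-par}): if $v_0\le v_1$ with equality near $\partial\Omega$, then $\int_\Omega u\,dd^cv_0\wedge T\le\int_\Omega u\,dd^cv_1\wedge T$. Applying this successively in each slot and then using $u\le u^j$ against the positive current $S^j$ gives, for \emph{every} $j$,
\[
\int_Bu\,S\wedge\omega^{n-k}\le\int_Bu^j\,S^j\wedge\omega^{n-k},
\]
and semicontinuity of the (negative) masses on the open ball under weak convergence then yields $\int_B(uS-\Theta)\wedge\omega^{n-k}\le0$, which together with $\Theta\le uS$ finishes the proof. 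The point is that this is a global, boundary-normalized comparison for each fixed $j$, not a local $L^1$ estimate; a CLN-type bound is too crude to see the cancellation your tail estimate needs.
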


\begin{proof}\noqed
By the Bedford-Taylor theorem we have weak convergence
  $$S^j:=dd^cv_1^j\wedge\dots\wedge dd^cv_k^j
     \longrightarrow dd^cv_1\wedge\dots\wedge dd^cv_k=:S.$$
By \cite[Theorem~1.8]{D} the sequence $u^jS^j$ is locally 
weakly bounded and thus it is enough to show that, if 
$u^jS^j\to\Theta$ weakly, then $\Theta=uS$.

Take an elementary positive form $\alpha$ of bidegree $(n-k,n-k)$
and fix $j_0$ and $\ep>0$. Then for $j\geq j_0$ we have
  $$u^jS^j\wedge\alpha\leq u^{j_0}S^j\wedge\alpha
     \leq u^{j_0}\ast\rho_\ep\,S^j\wedge\alpha,$$
where $u^{j_0}\ast\rho_\ep$ is a standard regularization of $u^{j_0}$ 
by convolution, i.e., $\rho_\epsilon$ is a rotation
invariant approximate indentity.
Letting $j\to\infty$ we get 
$\Theta\wedge\alpha\leq u^{j_0}\ast\rho_\ep\,S\wedge\alpha$ 
and thus $\Theta\leq uS$. 

We will use the following lemma.
\end{proof}

\begin{lma}\label{int-par}
Let $u,v_0,v_1,\dots,v_n$ be psh functions
defined in a neighborhood of $\overline\Omega$ where $\Omega$ 
is a bounded domain in $\mathbb C^n$. Suppose that  all of these 
functions except possibly $u$ are bounded and set 
$T:=dd^cv_2\wedge\dots\wedge dd^cv_n$. Assume that 
$v_0\leq v_1$ in $\Omega$ and $v_0=v_1$ in $\Omega\cap U$,  
where $U$ is a neighborhood of $\partial\Omega$. Then
  $$\int_\Omega u\,dd^cv_0\wedge T\leq 
     \int_\Omega u\,dd^cv_1\wedge T.$$
\end{lma}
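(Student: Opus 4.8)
The plan is to reduce to the case where $u$ is bounded and then integrate by parts, exploiting the hypothesis that $v_0=v_1$ near $\partial\Omega$ to annihilate the boundary contribution. First I would set $u^j:=\max(u,-j)$, a decreasing sequence of \emph{bounded} psh functions tending to $u$ and defined in a neighborhood of $\overline\Omega$. Since $v_0,\dots,v_n$ are bounded, by the Bedford-Taylor theory the currents $\mu_0:=dd^cv_0\wedge T$ and $\mu_1:=dd^cv_1\wedge T$ are well-defined positive measures of finite mass on the compact set $\overline\Omega$. It thus suffices to prove
$$
\int_\Omega u^j\,d\mu_0\leq\int_\Omega u^j\,d\mu_1
$$
for each $j$ and then to let $j\to\infty$.

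For the bounded functions $u^j$ I would invoke the symmetric integration-by-parts formula of Bedford-Taylor. Put $f:=v_1-v_0$, a bounded function that is $\geq 0$ on $\Omega$ and vanishes identically on the neighborhood $\Omega\cap U$ of $\partial\Omega$. Extending $f$ by $0$ across $\partial\Omega$ produces a function that, near every point, is a difference of bounded psh functions and whose support is compactly contained in $\Omega$. Consequently the Bedford-Taylor products $dd^cf\wedge T=\mu_1-\mu_0$ and $dd^cu^j\wedge T$ are defined, and the integration-by-parts identity
$$
\int_\Omega u^j\,dd^cf\wedge T=\int_\Omega f\,dd^cu^j\wedge T
$$
holds with \emph{no} boundary term, precisely because $f$ is compactly supported in $\Omega$. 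Since $f\geq 0$ and $dd^cu^j\wedge T\geq 0$ (as $u^j$ is bounded psh and $T$ is positive and closed), the right-hand side is $\geq 0$, which is the desired inequality for $u^j$.

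Finally I would pass to the limit by monotone convergence. As $u^j\downarrow u$ with the $u^j$ uniformly bounded above on $\overline\Omega$ and $\mu_0,\mu_1$ finite positive measures, one has $\int_\Omega u^j\,d\mu_i\to\int_\Omega u\,d\mu_i$ for $i=0,1$, the limits being finite by Demailly's finite-mass theorem \cite[Theorem~1.8]{D}. The inequality for the $u^j$ is preserved in the limit, giving $\int_\Omega u\,d\mu_0\leq\int_\Omega u\,d\mu_1$, as claimed. I would emphasize that this argument does not appeal to Theorem~\ref{appr}, which is important since the lemma is itself used to complete the proof of that theorem.

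The main obstacle is the rigorous justification of the integration by parts at the level of currents rather than smooth forms: one must verify that $dd^cf\wedge T$ and $dd^cu^j\wedge T$ really are the Bedford-Taylor wedge products and that the symmetric identity holds with genuinely vanishing boundary terms. This is exactly the step that consumes the hypothesis $v_0=v_1$ near $\partial\Omega$; once it is secured, the positivity of $dd^cu^j\wedge T$ and the monotone limit are routine.
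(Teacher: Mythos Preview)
Your argument is correct. Both your proof and the paper's follow the same skeleton---approximate $u$, integrate by parts using that $v_1-v_0$ has compact support in $\Omega$, use positivity, then pass to the limit---but the regularizations differ. The paper mollifies $u$ by convolution $u\ast\rho_\varepsilon$ (smooth psh) and also mollifies $v_0-v_1$ by $\rho_\delta$, so the integration by parts is the trivial distributional identity for a smooth compactly supported test function against the closed current $T$; the limits then use Bedford--Taylor continuity and monotone convergence. You instead truncate $u$ to the bounded psh $u^j=\max(u,-j)$ and invoke the packaged Bedford--Taylor integration-by-parts for bounded psh functions. Your route is slightly more black-boxed (the identity $\int u^j\,dd^c f\wedge T=\int f\,dd^c u^j\wedge T$ for $f=v_1-v_0$ compactly supported is itself proved by a mollification argument), while the paper's double-mollification is more self-contained. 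Your explicit appeal to Demailly's finite-mass theorem for the limit step, and your remark that the lemma must avoid circularity with Theorem~\ref{appr}, are both to the point.
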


\begin{proof}
We have 
\begin{align*}
\int_\Omega u\,dd^c v_0\wedge T-\int_\Omega u\,dd^cv_1\w T
  &=\lim_{\ep\to 0}\int_\Omega u\ast\rho_\ep\,dd^c(v_0-v_1)\wedge T\\
  &=\lim_{\ep\to 0}\lim_{\delta\to 0}\int_\Omega u\ast\rho_\ep\, 
       dd^c\big ((v_0-v_1)\ast\rho_\delta\big )\wedge T\\
  &=\lim_{\ep\to 0}\lim_{\delta\to 0}
      \int_\Omega(v_0-v_1)\ast\rho_\delta\,dd^c(u\ast\rho_\ep) 
          \wedge T \leq 0. 
 % &\leq 0.
\end{align*}
\end{proof}

\begin{proof}[End of proof of Theorem~\ref{appr}]
We may assume that all functions are defined in a neighborhood
of a ball $\overline B=\overline B(z_0,r)$ and, similarly as in the proof of
Bedford-Taylor's theorem,
that  $v_\ell^j=v_\ell=A(|z-z_0|^2-r^2)$ near 
$\partial B$ for some $A>0$, cf.,  e.g., the proof of \cite[Theorem~1.5]{D}.
 Since $\Theta\leq uS$, it remains 
to prove that $\int_B(uS-\Theta)\wedge\omega^{n-k}\leq 0$, where 
$\omega=dd^c|z|^2$. By successive application of Lemma ~\ref{int-par}
we get
  $$\int_Bu\,dd^cv_1\wedge\dots\wedge dd^cv_k\wedge\omega^{n-k}
     \leq\int_Bu^jdd^cv_1^j\wedge\dots\wedge dd^cv_k^j\wedge
        \omega^{n-k}.$$
Therefore,
  $$
\int_Bu\,S\wedge\omega^{n-k}\leq 
\liminf_{j\to\infty}  \int_Bu^jdd^cv_1^j\wedge\dots\wedge
dd^cv_k^j\wedge\omega^{n-k}\leq 
\int_B\Theta\wedge\omega^{n-k},$$
and thus the theorem follows.
\end{proof}

Theorem \ref{appr} generalizes a result of Demailly (see 
\cite{Dembook}, Proposition III.4.9 on p.\ 155) who assumed 
in addition that a complement of the open set where 
$u,v_1,\ldots,v_k$ are locally bounded has vanishing
$(2n-1)$-dimensional Hausdorff measure.

\section{The case of divisorial singularities}\label{sec4}
In this section we first prove a special case of Theorem ~\ref{conv}. 

\begin{thm} \label{div}
Assume that $u=\log|f|+v$ is negative, where $f$ is 
holomorphic and $v$ is a bounded psh function. Let $\chi_j$ be as 
in Theorem ~\ref{conv}. Then
  $$\big(dd^c(\chi_j\circ u)\big)^k\longrightarrow dd^cu\wedge(dd^cv)^{k-1}$$
as $j\to\infty$.
\end{thm}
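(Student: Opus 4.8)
The plan is to prove Theorem~\ref{div} by reducing the convergence of $\big(dd^c(\chi_j\circ u)\big)^k$ to the continuity result established in Theorem~\ref{appr}. The key structural observation is that $u = \log|f| + v$, so on $\{f \neq 0\}$ the function $\log|f|$ is pluriharmonic; the only source of singularity is the $\log|f|$ term, while $v$ is bounded psh. The idea is to expand $dd^c(\chi_j \circ u)$ using the chain rule for psh functions composed with convex functions: since $\chi_j$ is convex and nondecreasing, we have
$$
dd^c(\chi_j\circ u)=\chi_j'(u)\,dd^c u+\chi_j''(u)\,du\wedge d^c u,
$$
where the second term is a positive $(1,1)$-current. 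Raising this to the $k$-th power and expanding, one obtains a sum of wedge products of these two pieces.

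First I would set $u_j:=\chi_j\circ u$, which is a decreasing sequence of psh functions converging pointwise to $u$; moreover, since each $\chi_j$ is bounded, each $u_j$ is a \emph{bounded} psh function. Therefore the Bedford-Taylor operators $(dd^c u_j)^k$ are the classical ones, and I want to identify their limit. The natural route is to write $u_j=\log|f|+v$ wherever this makes sense, but since $\chi_j$ mixes the two terms, the cleaner approach is to work on $\{f\neq 0\}$, where everything is locally bounded, and then control the mass near $\{f=0\}$. On $\{f\neq 0\}$, Theorem~\ref{appr} (or the classical Bedford-Taylor continuity) gives $(dd^c u_j)^k\to(dd^c u)^k=dd^c u\wedge(dd^c v)^{k-1}$, using the identity \eqref{skata0}. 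The task is thus to show that no mass escapes or concentrates spuriously on $\{f=0\}$ in the limit.

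The main technical step, and the one I expect to be the principal obstacle, is controlling the behavior near the singular set $\{f=0\}$. Here I would exploit that $(dd^c u_j)^k$ has a uniform mass bound (from Demailly's theory and the fact that $u_j\geq u$ with $u_j$ bounded), so the sequence is weakly precompact; any weak limit $\Theta$ then agrees with $dd^c u\wedge(dd^c v)^{k-1}$ on $\{f\neq 0\}$. The delicate point is to rule out an extra positive current supported on $\{f=0\}$. I would handle this by a two-sided mass comparison in the spirit of the proof of Theorem~\ref{appr}: an upper bound on the limiting mass coming from the monotonicity $u_j\leq u_{j_0}$ together with regularization, and a matching lower bound. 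Since $\chi_j$ is convex, I would use the convexity to show that the Hessian-type term $\chi_j''(u)\,du\wedge d^c u$ contributes correctly, invoking the Poincar\'e-Lelong formula to see that the part of $dd^c u\wedge(dd^c v)^{k-1}$ supported on $\{f=0\}$ is exactly $[f=0]\wedge(dd^c v)^{k-1}$.

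To make the mass comparison rigorous, I would reduce to testing against $(dd^c v)^{k-1}\wedge\omega^{n-k}$ or, after the standard localization to a ball with $v$ modified to agree with a fixed quadratic near the boundary, against $\omega^{n-k}$, as in the End of proof of Theorem~\ref{appr}. The upper semicontinuity of the mass on the closed set $\{f=0\}$, combined with the lower bound obtained by comparing $u_j$ with its regularizations via Lemma~\ref{int-par}, should pin down the limit uniquely. Concretely, I expect the argument to show
$$
\int_B(dd^c u_j)^k\wedge\omega^{n-k}\longrightarrow\int_B dd^c u\wedge(dd^c v)^{k-1}\wedge\omega^{n-k},
$$
which together with the already-established agreement off $\{f=0\}$ forces $\Theta=dd^c u\wedge(dd^c v)^{k-1}$ globally. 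The crux is verifying that the total mass passes to the limit without loss, and this is where the boundedness of the $\chi_j$ and their convexity are essential, since these properties are precisely what prevent the pathological mass concentration seen for general decreasing sequences (cf.\ Examples~\ref{skola} and~\ref{dagis}).
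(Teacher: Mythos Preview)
Your chain-rule expansion is the right starting point, but you abandon the algebraic route too early in favor of a mass-comparison argument that does not close. The paper pushes the computation one step further and obtains an exact \emph{identity}, which is the missing idea.

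On $\{f\neq 0\}$, since $du\wedge d^cu\wedge du=0$, the expansion of $(\chi_j''\circ u\,du\wedge d^cu+\chi_j'\circ u\,dd^cu)^k$ collapses to
\[
\big(k\chi_j''\circ u\,du\wedge d^cu+\chi_j'\circ u\,dd^cu\big)\wedge(\chi_j'\circ u\,dd^cu)^{k-1}
=d\big((\chi_j'\circ u)^k\,d^cu\big)\wedge(dd^cu)^{k-1}.
\]
Now set $\gamma_j(t):=\int_{-1}^t(\chi_j'(s))^k\,ds+\chi_j(-1)$. Each $\gamma_j$ is bounded, convex, nondecreasing on $(-\infty,0)$, with $\gamma_j'=(\chi_j')^k$, and $\gamma_j\searrow t$. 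The right-hand side above is therefore $dd^c(\gamma_j\circ u)\wedge(dd^cv)^{k-1}$ (using $dd^cu=dd^cv$ off $\{f=0\}$). Since neither side charges $\{f=0\}$, one obtains the global identity
\[
\big(dd^c(\chi_j\circ u)\big)^k=dd^c\big(\gamma_j\circ u\,(dd^cv)^{k-1}\big),
\]
and Theorem~\ref{appr} applied with $u^j=\gamma_j\circ u\searrow u$ and $v_\ell\equiv v$ finishes the proof immediately.

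Your proposed alternative --- prove convergence off $\{f=0\}$ and then pin down the mass on $\{f=0\}$ by a two-sided comparison via Lemma~\ref{int-par} --- has a genuine gap at the crucial step. Lemma~\ref{int-par} compares integrals of the form $\int u\,dd^cv_0\wedge T$ with $T$ fixed; but $(dd^cu_j)^k$ contains $k$ copies of $dd^cu_j$, none of which is $dd^cv$, so there is no evident way to place both $(dd^cu_j)^k$ and $dd^cu\wedge(dd^cv)^{k-1}$ into that framework with a common $T$. Your suggestion to ``modify $v$ to agree with a fixed quadratic near the boundary'' does not help, since modifying $v$ leaves $(dd^cu_j)^k$ unchanged. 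Moreover, the uniform local mass bound for $(dd^cu_j)^k$ that you invoke is itself not obvious from your setup: Chern--Levine--Nirenberg controls it by $\|u_j\|_{L^\infty}^k$, which blows up. The $\gamma_j$ identity resolves all of these issues simultaneously by rewriting $(dd^cu_j)^k$ as $dd^c$ applied to a current of exactly the form handled by Theorem~\ref{appr}.
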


%\vskip 7pt

\begin{proof} We will use an idea from \cite{Bproc}.
Notice that locally on $(-\infty,0)$, the sequence $\chi'_j$ 
is bounded and tends to $1$ uniformly when $j\to \infty$.
For each  $j$, 
$$
\gamma_j(t):=\int_{-1}^t (\chi_j'(s))^k ds+ \chi_j(-1)
$$
is bounded, convex and  nondecreasing on $(-\infty,0)$,  and
$\gamma_j'=(\chi_j')^k$, where the derivative exists.  Moreover,
the sequence $\gamma_j$ is decreasing and tends to $t$.

Let us first assume that $\chi_j$, and hence $\gamma_j$, are smooth.
Since $\log|f|$ is pluriharmonic on $\{f\neq 0\}$  we have that
\begin{align*}%
\big (dd^c(\chi_j\circ u)\big )^k
   &=\big (\chi_j''\circ u\,du\wedge d^cu+\chi_j'\circ u\,dd^cu\big )^k\\
   &=\big (k\chi_j''\circ u\,du\wedge d^cu+\chi_j'\circ u\,dd^cu\big )
      \wedge\big (\chi'\circ u \, dd^cu\big )^{k-1}\\
   &=d\big((\chi_j'\circ u)^kd^cu\big)\wedge(dd^cu)^{k-1}\\
   &=dd^c(\gamma_j\circ u)\wedge(dd^cv)^{k-1}\\
   &=dd^c\big(\gamma_j\circ u\,(dd^cv)^{k-1}\big)
\end{align*}
there. Since none of the above currents charges the set $\{f=0\}$,
the equality 
\begin{equation}\label{orm}
(dd^c(\chi_j\circ u))^k=dd^c\big(\gamma_j\circ u\,(dd^cv)^{k-1}\big)
\end{equation}
holds everywhere. If $\chi_j$ is not smooth we make
a regularization $\chi_{j,\epsilon}=\chi_j* \rho_\epsilon$. Then $\chi_{j,\epsilon}'\to
\chi_j'$ in $L^1_{loc}(-\infty,0)$ and hence the associated $\gamma_{j,\epsilon}$ tend to $\gamma_j$ locally uniformly. 
We conclude that \eqref{orm} still holds. 
The theorem now follows from \eqref{orm} and Theorem ~\ref{appr}.
\end{proof}

The following example shows that $(dd^c u_j)^k$ does not converge to
$(dd^c u)^k$ for general decreasing sequences of psh functions $u_j\to
u$. 

\begin{ex}\label{skola}
 Let
\begin{equation*}%\label{kex}
  u(z)=\log|z_1|+|z_2|^2.
\end{equation*}
One easily checks that 
 $$
(dd^cu)^2=[z_1=0]\wedge dd^c|z_2|^2\neq 0.
$$
Thus, if $u_j=\chi_j\circ u$, where $\chi_j$ is chosen as
Theorem~\ref{conv}, e.g.,  
$u_j=(1/2)\log(|z_1|^2e^{2|z_2|^2}+1/j)$, then 
$$
(dd^cu_j )^2\to (dd^cu)^2.
$$
However,  
$v_j:=(1/2)\log(|z_1|^2+1/j)+ |z_2|^2$
are also smooth psh functions that 
decrease to $u$ but 
$$
(dd^cv_j )^2\to 2[z_1=0]\w dd^c|z_2|^2=2(dd^c u)^2.
$$

It follows that $u$ does not 
belong to the domain of definition of the Monge-Amp\`ere operator; in
fact, this follows directly from \cite[Theorem~1.1]{Bma} since clearly
$u\notin W^{1,2}_{loc}$. 
By \cite[Theorem~4.1]{Bma} one can find another 
approximating sequence of smooth psh functions decreasing to $u$ 
whose Monge-Amp\`ere measures do not have locally uniformly finite
mass near $\{z_1=0\}$. 
\end{ex}

Recall that a psh function $u$ is called 
{\it maximal} in an open set $\Omega$ in $\C^n$ if for any other psh $v$ in $\Omega$
satisfying $v\le u$ outside a compact set, we have $v\le u$ in $\Omega$.
 We refer to \cite{S, Bln} for 
basic properties of maximal psh functions.  In particular, $u$ is maximal if and only if
for each $\Omega'\Subset \Omega$ and psh $v$ such that $v\le u$ on
$\partial\Omega'$ one has $v\le u$ in $\Omega'$. 
By Bedford-Taylor's 
theory \cite{BT1, BT2} a locally bounded psh $u$ 
is maximal if and only if 
$(dd^cu)^n=0$. 

The following result due to Rashkovsii, see \cite[Theorem~1]{R}, gives a local characterization of 
maximal psh functions with divisorial 
singularities. 

\begin{prop}\label{skata} 
Let $\Omega$ be a domain in $\mathbb C^n$, $n\ge 2$,  $f$ 
a holomorphic function in $\Omega$ (not vanishing
identically),  and $v$ a locally bounded psh function in $\Omega$. Then 
$u=\log|f|+v$ is maximal in $\Omega$ if and only if $v$ is maximal
in $\Omega$.
\end{prop}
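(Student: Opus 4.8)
The plan is to prove both directions using the characterization that $u = \log|f| + v$ has Monge-Amp\`ere current given by the formula \eqref{skata0}, namely $(dd^cu)^n = dd^cu \wedge (dd^cv)^{n-1} = [f=0]\wedge(dd^cv)^{n-1} + (dd^cv)^n$, together with the fact that for a locally bounded psh function maximality is equivalent to the vanishing of the top Monge-Amp\`ere measure. The immediate temptation is to simply read off equivalence from $(dd^cu)^n = 0 \iff (dd^cv)^n = 0$, but this does \emph{not} work directly: $u$ is \emph{unbounded}, so the Bedford-Taylor characterization of maximality via $(dd^cu)^n = 0$ does not apply to $u$, and moreover the two terms $[f=0]\wedge(dd^cv)^{n-1}$ and $(dd^cv)^n$ are both positive so their sum vanishing is genuinely stronger than $(dd^cv)^n = 0$. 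Hence the argument must be made more carefully, working directly with the definition of maximality via comparison.

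\textbf{The direction ($v$ maximal $\Rightarrow$ $u$ maximal$\,$).} First I would assume $v$ is maximal and take any psh $w$ with $w \le u$ outside a compact set $K \subset \Omega$; the goal is $w \le u$ everywhere. The idea is that on $\{f \ne 0\}$, where $\log|f|$ is pluriharmonic, the function $w - \log|f|$ is psh and is bounded above by $v$ near $K$, so comparing $w - \log|f|$ against the maximal function $v$ should force $w - \log|f| \le v$, i.e.\ $w \le u$, on $\{f \ne 0\}$. The subtlety is that $w - \log|f|$ is defined only off $\{f=0\}$ and may fail to be psh across $\{f=0\}$; I would handle this by noting that $w - \log|f|$ is psh and bounded above on a neighborhood of $\{f=0\}$ (since $w \le u = \log|f| + v$ there and $v$ is locally bounded), so it extends to a psh function across the pluripolar set $\{f=0\}$ by the standard removable-singularity theorem. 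Then maximality of $v$ applies to the extension and gives $w \le u$ on all of $\Omega$.

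\textbf{The direction ($u$ maximal $\Rightarrow$ $v$ maximal$\,$).} Conversely, assume $u$ is maximal and let $w \le v$ agree with $v$ outside a compact set. Then $\log|f| + w \le \log|f| + v = u$ outside that compact set, and $\log|f| + w$ is psh, so maximality of $u$ yields $\log|f| + w \le u$, hence $w \le v$, everywhere. This direction is essentially formal. The \textbf{main obstacle} is the removable-singularity step in the first direction: one must verify that $w - \log|f|$ is bounded above near $\{f=0\}$ in order to invoke the extension theorem, which relies on the local boundedness of $v$ and on $\{f=0\}$ being a (complete) pluripolar set of the appropriate type; this is where the hypotheses $n \ge 2$ and the divisorial structure of the singularity are genuinely used, and care is needed to ensure the extended function still dominates $w$ after extension.
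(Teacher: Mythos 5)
The paper does not actually prove Proposition~\ref{skata}: it is quoted from Rashkovskii \cite[Theorem~1]{R}, so there is no internal proof to compare your argument against; I can only assess it on its own terms. Your second direction ($u$ maximal $\Rightarrow v$ maximal) is correct and essentially formal, up to the small point that cancelling $\log|f|$ only gives $w\le v$ on $\{f\ne 0\}$, and one passes to $\{f=0\}$ by the sub-mean-value inequality / upper semicontinuity.

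The first direction, however, has a genuine gap at exactly the step you flag as the ``main obstacle,'' and the justification you offer for it is circular. You assert that $w-\log|f|$ is bounded above on a neighborhood of $\{f=0\}$ ``since $w\le u$ there and $v$ is locally bounded''---but the hypothesis only gives $w\le u$ \emph{outside} the compact set $K$. On a neighborhood of $\{f=0\}\cap K$ the inequality $w\le u$ is precisely the conclusion you are after; without it, $w$ is merely a psh function (hence locally bounded above), so $w-\log|f|$ may a priori tend to $+\infty$ along $\{f=0\}\cap K$, and the removable-singularity theorem for psh functions genuinely fails without the upper bound (e.g.\ $\mathfrak{R}\mathfrak{e}(1/z_1)$ is pluriharmonic off $\{z_1=0\}$ and does not extend). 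Closing this gap---propagating the estimate $w\le \log|f|+\ordo(1)$ from $\Omega\setminus K$ into $K$---is the real content of the proposition. It can be done: every irreducible component $V$ of $\{f=0\}$ meets $\Omega\setminus K$ (a positive-dimensional analytic subset of a domain in $\C^n$ is never compact), so the generic Lelong number of $dd^cw$ along $V$ is at least $\mathrm{ord}_V f$ there; by Siu's semicontinuity and decomposition theorems this holds along all of $V$, giving $dd^cw\ge dd^c\log|f|$ and hence a psh extension of $w-\log|f|$ across $\{f=0\}$. But this propagation argument (Siu's theorem or an equivalent) is the missing idea, not a routine verification. Once the extension $\tilde h$ is in hand, the rest of your plan---$\tilde h\le v$ outside $K$, apply maximality of $v$, conclude $w\le u$ off $\{f=0\}$ and then everywhere by upper semicontinuity---does go through.
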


One can rephrase Proposition 3.3 as follows: if a psh function
u is globally of the form $\log|f|+v$, where f is a holomorphic
function and v is psh and locally bounded, then u is maximal if
and only if it is maximal outside the singular set. It would be
interesting to verify whether such a characterization is true
globally for psh functions with divisorial singularities.

\begin{ex}\label{dagis}
Proposition ~\ref{skata} implies that the psh function $u$ in Example
~\ref{skola} is maximal (in any domain in $\C^2$).
Thus it is not true in general for psh functions with analytic
singularities $u$ that $(dd^c u)^n=0$ is equivalent to $u$ being maximal.

Moreover in any bounded domain 
we can find a sequence of continuous maximal psh 
functions decreasing to $u$, or a sequence $u_j$ of smooth psh functions 
decreasing to $u$ such that $(dd^c u_j)^2\to 0$ weakly, 
see e.g., \cite[Proposition~1.4.9]{Bln}. 
It follows that (the mass of) $\lim_j(dd^c u_j)^2$ when $u_j$ is a decreasing
sequence of bounded psh functions $u_j\to u$ can be
both smaller and larger than (the mass of) $(dd^cu)^2$, cf.\ Example
~\ref{skola}. 
\end{ex}
 
\begin{remark}
In \cite{Buzb} it was shown that the psh function 
\begin{equation}\label{uzb}
u(z):=-\sqrt{\log|z_1|\log|z_2|}
\end{equation}
is maximal in $\{|z_1|<1,\ |z_2|<1\}\setminus\{(0,0)\}$, and that the
Monge-Amp\`ere measure of $\max\{u,-j\}$, however, does not converge weakly
to 0 as $j\to\infty$. 

In view of Theorem ~\ref{div} and Proposition ~\ref{skata} the function
$u$ in Examples ~\ref{skola} and \ref{dagis} 
gives a new example of such a maximal psh function. 
\end{remark}

Proposition~\ref{skata} implies that for psh functions with divisorial 
singularities it suffices to check their maximality outside
hypersurfaces. This 
is not true in general 
as the following
example shows. 

\begin{ex}
The function given by \eqref{uzb} is psh in the unit bidisc,
maximal away from the singular set, i.e. the hypersurface $\{z_1z_2=0\}$, but not maximal in the
entire bidisc $\Delta^2$. In fact, the psh function 
  $$v(z):=-\sqrt{-\log|z_1|}-\sqrt{-\log|z_2|}+1$$
coincides with $u$  
on the boundary of the bidisk
$(\Delta(0,1/e))^2$, but $v>u$ on the diagonal inside $(\Delta(0,1/e))^2$.
\end{ex}

\section{The general case of Theorem~\ref{conv}}\label{sec5} 

We now give a proof of Theorem ~\ref{conv}. 
Since the statement is local we may assume that $u=\log|F|+b$,
where $F$ is a tuple of holomorphic functions on an open set 
$X\subset\C^n$, and $b$ is bounded.   

Let $Z$ be the common zero set of $F$.
By Hironaka's theorem one can find a proper map
$\pi\colon X'\to X$ that is a biholomorphism 
$X'\setminus\pi^{-1}Z\simeq X\setminus Z$, where $\pi^{-1}Z$ 
is a hypersurface,  such that the ideal sheaf generated by the functions
$\pi^*f_j$ is principal.  Let $D$ be the exceptional divisor and let $L\to X'$ be 
the associated line bundle that has a global holomorphic section $f^0$ whose
divisor is precisely $D$. It then follows that
$\pi^\ast F=f^0 F'$, where 
 $F'$ is a nonvanishing tuple of sections of $L^{-1}$. Given a local frame for 
$L$ on $X'$ we can thus write $F=f^0F'$ where $f^0$ is a holomorphic
function and $F'$  a nonvanishing tuple of holomorphic functions. 
Then
\begin{equation*}%\label{ball}
\pi^\ast u=\log|\pi^\ast F|+\pi^\ast b=
\log|f^0|+\log|F'|+\pi^\ast b=:\log|f^0|+v,
\end{equation*}
and since $\pi^\ast u$ is psh it follows that $v$ is.  Another local
frame gives rise to the same local decomposition up to a pluriharmonic
function.  Notice that
$$
dd^c\log|f^0|=[D],
$$
where $D$ is the divisor determined by $f^0$.

In view of Theorem ~\ref{div}, 
\begin{equation*}%\label{ball2}
\big(dd^c(\chi_j\circ \pi^*u)\big)^k\to (dd^c\pi^*u)^k=[D]\w (dd^c v)^{k-1}+(dd^c v)^k.
\end{equation*}
Assume that $a$ is psh and bounded. Since neither $(dd^c a)^k$ nor
$(dd^c \pi^*a)^k$ charge subvarieties it follows that 
\begin{equation*}
\pi_* (dd^c \pi^*a)^k=(dd^c a)^k.
\end{equation*}
Since 
$
\pi^*(\chi_j\circ u)=\chi_j\circ \pi^*u, 
$
thus
$$
\big (dd^c(\chi_j\circ u)\big)^k
=
\pi_*\big(dd^c\big(\pi^*(\chi_j\circ u)\big)\big)^k
=
\pi_*\big(dd^c(\chi_j\circ\pi^* u)\big)^k\to
\pi_*\big([D]\w (dd^c v)^{k-1}+(dd^c v)^k\big).
$$
By \cite[Equation~(4.5)]{AW}, 
$$  
\pi_*\big([D]\w (dd^c v)^{k-1}+(dd^c v)^k\big)=(dd^c u)^k
$$
and thus  Theorem ~\ref{conv} follows.

\begin{remark}\label{nons}
The definition of $(dd^c u)^k$  as 
well as proof of Theorem~\ref{conv} 
work just as well if $X$ is a reduced, not necessarily smooth, analytic space,
cf., e.g., \cite{ASWY}.
\end{remark}

\section{Proof and discussion of Theorem ~\ref{tmass}}\label{kahl}
We start by showing that the Monge-Amp\`ere operators 
$(\omega + dd^c \varphi)^k$ are well-defined whenever $\varphi$ is an
$\omega$-psh function with analytic singularities. 

\begin{lma}\label{niko} 
Let $\varphi$ be an $\omega$-psh function with analytic singularities. Then $(dd^c (g+\varphi))^k$
is independent of the local potential $g$ of $\omega$. 
\end{lma}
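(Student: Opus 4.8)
The plan is to show that the local expression $(dd^c(g+\varphi))^k$ does not depend on the choice of local potential $g$ of $\omega$. The key point is that any two local potentials $g_1$ and $g_2$ for $\omega$ on an open set $\mathcal U$ differ by a pluriharmonic function: since $dd^c g_1 = \omega = dd^c g_2$, we have $dd^c(g_1 - g_2) = 0$, so $h := g_1 - g_2$ is pluriharmonic on $\mathcal U$. Writing $u_1 := g_1 + \varphi$ and $u_2 := g_2 + \varphi$, both are psh functions with divisorial/analytic singularities (after the standard reduction, or directly by hypothesis) and they satisfy $u_1 = u_2 + h$ with $h$ pluriharmonic. Thus the task reduces to proving that $(dd^c u)^k$, as defined by \eqref{def}, is unchanged when $u$ is modified by adding a pluriharmonic function.

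To prove this invariance, I would work on the singular set's complement first and then extend. On $\{F \neq 0\}$, where $u_1$ and $u_2$ are the potentials in question, the definition \eqref{snabel} is purely local Bedford-Taylor theory for locally bounded psh functions, and adding the pluriharmonic $h$ (which satisfies $dd^c h = 0$) clearly does not affect $dd^c u$ there; hence $(dd^c u_1)^{k} = (dd^c u_2)^{k}$ on $\{F \neq 0\}$, and consequently the natural extensions $\1_{\{F\neq 0\}}(dd^c u_1)^{k-1} = \1_{\{F\neq 0\}}(dd^c u_2)^{k-1}$ agree as currents on all of $\mathcal U$. The remaining content is that the full operator \eqref{def}, namely $dd^c\big(u\,\1_{\{F\neq 0\}}(dd^c u)^{k-1}\big)$, also agrees. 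Since the extended currents $\1_{\{F\neq 0\}}(dd^c u)^{k-1}$ coincide for $u_1$ and $u_2$, it remains to control the difference
\begin{equation*}
dd^c\big((u_1 - u_2)\,\1_{\{F\neq 0\}}(dd^c u)^{k-1}\big) = dd^c\big(h\,\1_{\{F\neq 0\}}(dd^c u)^{k-1}\big),
\end{equation*}
and I would show this vanishes because $h$ is (locally) the real part of a holomorphic function, so $dd^c h = 0$ and $h$ is smooth; one can then integrate by parts against the closed current $\1_{\{F\neq 0\}}(dd^c u)^{k-1}$ (closedness being exactly \cite[Eq.~(4.8)]{AW}) to move the $dd^c$ onto $h$ and conclude it is zero.

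An alternative and perhaps cleaner route, which I expect is what the authors intend given Theorem~\ref{conv}, is to use the continuity statement directly: approximate $u_i$ by $\chi_j \circ u_i$ as in Theorem~\ref{conv}, for which $(dd^c(\chi_j \circ u_i))^k \to (dd^c u_i)^k$. For smooth or locally bounded approximants the invariance under adding a pluriharmonic function is immediate from classical Bedford-Taylor calculus, and passing to the limit transfers the equality to the singular operators. One subtlety is that $\chi_j \circ (u_2 + h)$ is not simply $(\chi_j \circ u_2) + h$, so the approximation must be set up carefully; the reduction via resolution of singularities to the divisorial case (Section~\ref{sec5}) makes this manageable since there one has the explicit formula \eqref{skata0}.

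The main obstacle will be the behavior across the singular set $\{F = 0\}$: all the equalities are transparent on $\{F \neq 0\}$, and the real work is justifying that the currents, which are defined by applying $dd^c$ to objects with locally finite mass near $\{F = 0\}$, genuinely agree there and that no extra mass is created on the singular variety by the pluriharmonic correction. I would handle this by invoking the finite-mass and closedness results from \cite{AW} together with the fact that $h$ is smooth (hence multiplication by $h$ preserves the pseudomeromorphic structure and commutes appropriately with the extension across $\{F = 0\}$), which is precisely what forces the difference current to vanish identically rather than merely off the singular locus.
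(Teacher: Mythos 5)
Your primary argument is correct and is essentially the paper's proof: the authors also reduce to adding a pluriharmonic $h$, split $(g+h+\varphi)T$ linearly as $(g+\varphi)T + hT$ using finite mass, and kill the extra term via $dd^c(hT)=dd^ch\wedge T=0$, using smoothness of $h$ and closedness of $T=\1_{X\setminus Z}(dd^c(g+\varphi))^\ell$ (they organize this as an induction on $k$, which matches your step-by-step agreement of the $(k-1)$-st currents). The alternative route via Theorem~\ref{conv} is not what the paper does and, as you note yourself, is complicated by $\chi_j\circ(u+h)\neq\chi_j\circ u+h$; the direct argument is the right one.
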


\begin{proof}
We need to prove that 
\begin{equation}\label{solros}
\big (dd^c (g+h+\varphi)\big)^k = \big (dd^c (g+\varphi)\big )^k
\end{equation}
if $h$ is pluriharmonic. 
Clearly this is true for $k=1$. 

If $T$ is a positive closed current and $u$ and $v$ are functions such
that $uT$ and $vT$ have locally finite mass, then clearly so has
$(u+v)T=uT+vT$. 
Assuming that \eqref{solros} holds for $k=\ell$, it follows that 
\begin{multline*}
\big (dd^c
(g+h+\varphi)\big)^{\ell+1}=
dd^c\big((g+h+\varphi)\1_{X\setminus
  Z}\big (dd^c (g+h+\varphi)\big)^\ell\big)=\\
dd^c\big((g+\varphi)\1_{X\setminus Z}\big (dd^c (g+\varphi)\big)^\ell\big)+
dd^c\big (h\1_{X\setminus Z}\big (dd^c (g+\varphi)\big)^\ell\big), 
\end{multline*}
where $Z$ is the singular set of $\varphi+g$. 
Since $h$ is pluriharmonic the rightmost expression equals
\[
\big (dd^c (g+\varphi)\big )^{\ell+1} + dd^ch \wedge \1_{X\setminus Z}
(dd^c(g+\varphi)\big )^\ell=\big (dd^c (g+\varphi)\big )^{\ell+1}.
\]
Thus \eqref{solros} follows by induction. 
\end{proof}

\begin{proof}[Proof of Theorem~\ref{tmass}] 
For $k=0,\dots,n-1$ we let 
  $$T_k:=\1_{X\setminus Z}(\omega+dd^c\varphi)^k;$$
note that $T_0$ is just the function 1. 
Locally we can define
\begin{equation}\label{east}
\varphi T_k:= (g+\varphi)T_k - gT_k, 
\end{equation}
cf.\ \eqref{def}. 
This definition is independent of the local potential $g$ of $\omega$
and, cf.\ the proof of Lemma \ref{niko}, 
thus $\varphi T_k$ defines a global current on $X$. 
Applying $dd^c$ to \eqref{east} we get 
\begin{equation}\label{west}
dd^c (\varphi T_k)=dd^c\big ((g+\varphi)T_k\big) - dd^c(gT_k)= (\omega
+dd^c\varphi)^{k+1}-\omega \wedge T_k. 
\end{equation}
Now
\begin{multline}\label{south}
\int_X\omega^{n-k}\wedge T_k
=
\int_X\omega^{n-k-1}\wedge (\omega+dd^c\varphi)^{k+1}- 
\int_X\omega^{n-k-1}\wedge dd^c (\varphi T_k)
=\\
\int_X\omega^{n-k-1}\wedge \1_{Z}(\omega+dd^c\varphi)^{k+1}+ 
\int_X\omega^{n-k-1}\wedge T_{k+1}. 
\end{multline}
Here we have used \eqref{west} for the second equality; the second
term in the middle expression vanishes by Stokes' theorem. 
Applying \eqref{south} inductively to
$\int_X\omega^n=\int_X\omega^nT_0$ we get \eqref{north}. 
\end{proof}

Given an $\omega$-psh function $\varphi$, in \cite{GZ,BEGZ} was
introduced the \emph{non-pluripolar Monge-Amp\`ere operators} 
\[
\big\langle(\omega+dd^c\varphi)^k\big\rangle:= 
\lim_{j\to\infty}\1_{\{\varphi>-j\}}\big(\omega + dd^c\max(\varphi,
-j)\big)^k; 
\]
the definition is based on the corresponding local consctruction 
in \cite{BT3}.

Assume that $\varphi$ has analytic singularities with singular set
$Z$. Then 
$\langle(\omega+dd^c\varphi)^k\rangle$ coincides with the classical 
Monge-Amp\`ere operator outside $Z$ and it does not charge $Z$. Hence 
\[
\big\langle(\omega+dd^c\varphi)^k\big\rangle = \1_{X\setminus Z}
(\omega + dd^c\varphi)^k.
\]
Following \cite{AW}, cf.\ \cite{ASWY}, we let 
$$
M_k^\varphi:=\1_Z(dd^c\varphi+\omega)^k, \ k=1, \ldots, n.
$$
Using this notation we can rephrase Theorem~\ref{tmass}  as
\begin{equation}\label{pluto}
\int_X \big \langle (\omega+dd^c\varphi)^n \big \rangle 
=\int_X \omega^n-\sum_{k=1}^n\int_X M_k^\varphi\w\omega^{n-k}.
\end{equation}
In fact, by applying \eqref{south} inductively to $\int_X\omega^nT_0$
as in the proof of Theorem ~\ref{tmass}, but
stopping at $k=\ell-1$, we get: 
\begin{prop}\label{milk}
Let $\varphi$ be an $\omega$-psh function with analytic
singularities on a compact K\"ahler manifold $(X,\omega)$ of dimension
$n$. 
Then, for $\ell=1,\ldots, n$,  
\begin{equation}\label{juvel}
\int_X \big \langle (\omega+dd^c\varphi)^\ell \big \rangle \wedge
\omega^{n-\ell} 
=\int_X \omega^n -\sum_{k=1}^\ell\int_X M_k^\varphi\w\omega^{n-k}.
\end{equation}
\end{prop}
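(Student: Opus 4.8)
The plan is to re-run the telescoping argument from the proof of Theorem~\ref{tmass}, but to halt the recursion after $\ell$ steps rather than carrying it all the way to $k=n-1$. The engine is the identity \eqref{south}, which for each $k=0,\dots,n-1$ rewrites $\int_X\omega^{n-k}\w T_k$ as a boundary contribution involving $\1_Z(\omega+dd^c\varphi)^{k+1}$ plus the next remainder $\int_X\omega^{n-k-1}\w T_{k+1}$; crucially, the Stokes cancellation of $\int_X\omega^{n-k-1}\w dd^c(\varphi T_k)$ that produces \eqref{south} is valid for every such $k$, so nothing new needs to be checked in order to truncate the chain.

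Concretely, I would first record the two dictionary identities that translate the $T_k$-language of Theorem~\ref{tmass} into the statement of the proposition: $T_\ell=\1_{X\setminus Z}(\omega+dd^c\varphi)^\ell=\langle(\omega+dd^c\varphi)^\ell\rangle$, which is exactly the identification of the non-pluripolar product made just above the proposition, and $\1_Z(\omega+dd^c\varphi)^{k+1}=M_{k+1}^\varphi$, which holds by definition. Then, starting from $\int_X\omega^n=\int_X\omega^n\w T_0$ (using $T_0\equiv 1$), I would apply \eqref{south} successively with $k=0,1,\dots,\ell-1$, each step replacing the current remainder by an $M_{k+1}^\varphi$-term together with the next remainder. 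After the step $k=\ell-1$ the remainder is $\int_X\omega^{n-\ell}\w T_\ell$, and I stop there. Collecting the accumulated boundary contributions gives
\[
\int_X\omega^n=\sum_{k=1}^\ell\int_X M_k^\varphi\w\omega^{n-k}+\int_X\langle(\omega+dd^c\varphi)^\ell\rangle\w\omega^{n-\ell},
\]
and rearranging yields \eqref{juvel}.

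Since the analytic input — finiteness of the relevant masses, well-definedness of the global currents $\varphi T_k$, and the Stokes vanishing — is already established in the proof of Theorem~\ref{tmass}, I do not expect a genuine obstacle; the remaining content is purely the combinatorial bookkeeping of the truncated sum. The one point I would take care to verify is the boundary case $\ell=n$: there the recursion runs over the full range $k=0,\dots,n-1$, the final remainder becomes $\int_X\langle(\omega+dd^c\varphi)^n\rangle$, and \eqref{juvel} must reduce to the rephrasing \eqref{pluto} of Theorem~\ref{tmass}. Confirming this consistency is the natural sanity check that the truncation has been indexed correctly.
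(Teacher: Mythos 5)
Your proposal is correct and coincides with the paper's own argument: the paper obtains Proposition~\ref{milk} precisely by applying \eqref{south} inductively to $\int_X\omega^nT_0$ and stopping at $k=\ell-1$, together with the identification $T_\ell=\langle(\omega+dd^c\varphi)^\ell\rangle$ and $\1_Z(\omega+dd^c\varphi)^k=M_k^\varphi$. Your sanity check that $\ell=n$ recovers \eqref{pluto} is also consistent with how the paper presents the result.
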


From \cite[Theorem~1.16]{BEGZ} it follows that if $\varphi, \varphi'$
are $\omega$-psh with analytic singularities and $\varphi$ is
\emph{less singular} than $\varphi'$, i.e., $\varphi\geq
\varphi'+\ordo (1)$, then 
\begin{equation}\label{nonpolar}
\int_X \big\langle(\omega+dd^c\varphi)^\ell\big\rangle \wedge \omega^{n-\ell}\geq 
\int_X \big\langle(\omega+dd^c\varphi')^\ell\big\rangle \wedge \omega^{n-\ell}
\end{equation}
for each $\ell$. 
From \eqref{nonpolar} and Proposition ~\ref{milk} we conclude that
\[ 
\sum_{k=1}^\ell\int_X M_k^\varphi\w\omega^{n-k} \leq \sum_{k=1}^\ell\int_X M_k^{\varphi'}\w\omega^{n-k} 
\]
for each $\ell$. It is not true in general, however, that $\int_X
M_k^\varphi\w\omega^{n-k} \leq \int_X M_k^{\varphi'}\w\omega^{n-k}$
for each $k$, as is illustrated by the following example. 

\begin{ex}\label{segre}
Let $X=\Pk^2_{[z_0:z_1:z_2]}$ with the Fubini-Study metric $\omega$, and let
\[
\varphi=\log\Big (\frac{(|z_1|^2+|z_2|^2)^{1/2}}{|z|}\Big ) \text{
  and } 
\varphi'=\log\Big (\frac{|z_1|}{|z|}\Big ), 
\] 
cf.\ Example ~\ref{ormar}. 
Then $\varphi$ and $\varphi'$ are $\omega$-psh with analytic
singularities and clearly $\varphi$ is
less singular than $\varphi'$. 
Note that $M_2^\varphi=[z_1=z_2=0]$ and $M_1^{\varphi'}=[z_1=0]$,
whereas $M_1^\varphi$ and $M_2^{\varphi'}$ vanish. In particular,  
$\int_X M_2^\varphi>\int_X M_2^{\varphi'}$. 
\end{ex}

\begin{remark}
In general we cannot have a global continuity result like Theorem
~\ref{conv}. Indeed, assume that $\varphi$ is an $\omega$-psh function
with analytic singularities such that 
\[
\int_X(\omega+dd^c\varphi)^\ell\wedge \omega^{n-\ell}
      <\int_X\omega^n,\]
 cf.~ \eqref{juvel}; 
%$\sum_{k=1}^\ell\int_X M_k^\varphi\w\omega^{n-k}>0$; 
this holds, e.g., for $\varphi'$ in
Example ~\ref{segre} and $\ell=2$. 
Moreover, assume that there is a sequence of locally bounded
$\omega$-psh, or smooth, functions $\varphi_j$ converging to
$\varphi$. By Stokes' theorem 
\[
\int_X(\omega+dd^c\varphi_j)^\ell\wedge \omega^{n-\ell}
      =\int_X\omega^n
\]
for all $j$, and thus 
%in view of Proposition ~\ref{milk} 
%
%$(\omega+dd^c\varphi_j)^\ell$ 
%\[
%\int_X(\omega+dd^c\varphi_j)^\ell\wedge \omega^{n-\ell} \not \longrightarrow
%\int_X(\omega+dd^c\varphi_j)^\ell \wedge \omega^{n-\ell} < \int_X\omega^n
%\]
%and thus
 $(\omega+dd^c\varphi_j)^\ell$ cannot converge to
$(\omega+dd^c\varphi)^\ell$. 

\end{remark}

Let $X$ be a, possibly non-smooth, analytic space,  cf. Remark~\ref{nons}, and let
$\omega$ be a smooth positive $(1,1)$-form on $X$ that locally has a smooth
potential.  Then we still have the notion of $\omega$-psh function on $X$ and
the formulation and proof of Theorem~\ref{tmass}, as well as
the definitions of $M_k^\varphi$,  work as in the smooth case.

There is a close connection between Theorem ~\ref{tmass} and the
currents $M_k^\varphi$ and 
global (nonproper)
intersection theory, that will be studied in
a forthcoming paper by two of the authors. In some sense the currents
$M_k^\varphi$ can be seen as generalized intersection cycles, cf.\ \cite[Section~6]{ASWY}. Let us just give a simple example
with a proper intersection here, cf.\ Example~\ref{ormar} above. 

\begin{ex}  
Let $i\colon X\to \Pk^n$ be a projective variety of
dimension $p$, and
let $f$ be a $m$-homogeneous form in $\C^{n+1}$ that does not vanish identically on any
irreducible component of $X$; i.e., $Z(f)$ intersects $X$ properly.  If we consider
$f$ as a section of the line bundle $\Ok(m)\to \Pk^n$ then it has the natural norm
$\|f\|=|f(z)|/|z|^m$.  It follows that $u=\log\|f\|$ is $m\omega$-psh
on $X$, where $\omega$ is the Fubiny-Study form.
Notice that $\langle (m\omega + dd^c \varphi)^n\rangle=0$. Moreover $M_k=0$ for $k\ge 2$ and $M_1=dd^c\log|f|$. Thus
the equality \eqref{pluto} means that
$$
\int_X dd^c\log|f|\w\omega^{p-1}=m\int_X \omega^p=\deg Z\cdot \deg X
$$
and the rightmost expression is equal to 
\begin{equation}\label{sudda}
\int_{\Pk^n} [Z]\wedge [X]\w \omega^{p-1}.
\end{equation}
Since $[Z]\wedge [X]$ is the Lelong 
current of the proper intersection $Z\cdot X$ of $Z$ and $X$,
\eqref{sudda} equals $\deg (Z\cdot X)$ and thus 
\eqref{pluto} in this case is just an instance of  Bezout's formula.  
\end{ex}

\section{Some further comments}\label{pastor}

The Monge-Amp\`ere operators \eqref{def} are also closely related to
local intersection theory. 
Given a psh function of the form \eqref{as} on a possibly non-smooth 
analytic space $X$, we let 
$$
M^u_k:=\1_Z(dd^c u)^k, \ k=1, \ldots, n, 
$$
where $Z=\{F=0\}$.  
In \cite{AW, ASWY} it was proved that 
\begin{equation}\label{whitney}
\ell_x M^u_k = e_k(x), 
\end{equation}
where $\ell_x\mu$ denotes the Lelong number of the positive closed current $\mu$ at
$x$, and $e_k(x)$ is the $k$th \emph{Segre number} at $x$ of the ideal $\J$
generated by $F$. 
Segre numbers were introduced independently by Gaffney-Gassler
\cite{GG} and Tworzewski \cite{T} as certain local intersection
numbers, and in a purely algebraic way by Achilles-Manaresi
\cite{AM}. In fact, if $Z$ is discrete, then the only nonvanishing
Segre number $e_n(x)$ equals the classical \emph{Hilbert-Samuel
  multiplicity} of $\J$ at $x$. Thus \eqref{whitney} is a
generalization of the well-known fact the Lelong number of
$(dd^c\log|F|)^n$ is the Hilbert-Samuel multiplicity of $\J$ if $Z$ is
discrete.

\smallskip 

Demailly's approximation theorem \cite{D2} asserts that any
psh function $u$ on a bounded pseudoconvex domain $\Omega$ can be approximated by psh functions with analytic
singularities. 
Let   
$$u_j:=\frac 1{2j}\log\sup\left\{|f|^2\colon f\in\mathcal 
      O(\Omega),\ \int_\Omega|f|^2e^{-2ju}d\lambda\leq 1
        \right\}.$$
Then $u_j\to u$ pointwise and in $L^1_{loc}$ and 
there exists a sequence of positive constants $\ep_j$
decreasing to 0 such that the subsequence $u_{2^j}+\ep_j$
is decreasing, see \cite{DPS}; in view of \cite{K} this cannot be done
for the whole sequence $u_j$. Since $u_j$ are in fact defined
by weighted Bergman kernels, it is clear that locally they
can be written in the form \eqref{as} where $b$ is smooth.
If $u$ has an isolated analytic singularity
(so that the Demailly definition of the Monge-Amp\`ere operator applies),
it is proved in \cite{R1} that there is continuity for the Monge-Amp\`ere
masses of the $u_j$.
It would be interesting to investigate possible convergence properties
of  $(dd^c u_j)^k$  
in more general cases; for example when the initial function
$u$ also has analytic singularities,  or for more general
psh $u$ as a means to extend $(dd^c u)^k$  to such  $u$.

\end{document}